\newtheorem{theorem}{Theorem}
\newtheorem{lemma}[theorem]{Lemma}
\newtheorem{proposition}[theorem]{Proposition}
\newtheorem{corollary}[theorem]{Corollary}
\newtheorem{definition}[theorem]{Definition}
\theoremstyle{remark}\newtheorem{remark}{Remark}
\theoremstyle{remark}
\newcommand{\tuple}[1]{\left<{#1}\right>}
\title{Some inequalities for norms in $\mathbb{ R}^n$ and $\mathbb{ C}^n$}
\author{Stefan Gerdjikov}
\address{S. Gerdjikov\\Faculty of Mathematics and Informatics\\ Sofia University\\James Bourchier Blvd. 5, 1164 Sofia, Bulgaria \\
\vspace{1mm}
\newline
Institute for Information and Communication Technologies\\
\\Bulgarian Academy of Sciences\\Acad. G. Bonchev 2, 1113 Sofia, Bulgaria
}
\email{}
\author{Nikolai Nikolov}
\address{N. Nikolov\\Institute of Mathematics and Informatics\\Bulgarian Academy
of Sciences\\Acad. G. Bonchev 8, 1113 Sofia, Bulgaria\vspace{1mm}
\newline Faculty of Information Sciences\\
State University of Library Studies and Information Technologies\\Shipchenski prohod 69A,
1574 Sofia,	Bulgaria}
\email{nik@math.bas.bg}
\thanks{The second named author was partially supported by the Bulgarian National Science Fund,
Ministry of Education and Science of Bulgaria under contract KP-06-N52/3.}
\subjclass[2010]{52A40, 52A21, 32F17}
\begin{document}
	
\keywords{norm, convex domain}
	
\begin{abstract} {}
The main result of this paper is that for any norm on a complex or real $n$-dimensional linear space, every extremal basis satisfies inverted triangle inequality with scaling factor $2^n-1$. Furthermore, the constant $2^n-1$ is tight. We also prove that the norms of any two extremal bases are comparable with a factor of $2^n-1$, which, intuitively, means that any two extremal bases are quantitatively equivalent with the stated tolerance.
\end{abstract}
	
\maketitle
	
\section{Introduction}
\emph{Extremal bases}, originally introduced in~\cite{M92,H02}, have been established as a useful tool in the study of the properties of the so called $\mathbb{C}$-convex domains, $D$. On the one hand side they induce a natural orthonormal coordinate system around any given point $z$ in the interior of the domain $D$. On the other hand, under certain assumptions, see below, every extremal basis $B$ at point $z$ enjoys the following inequalities:
\begin{equation}\label{eq_basic}
\left(\sum_{b\in B} \frac{|\tuple{b,v}|}{d_D(z;b)}\right)^{-1}\lesssim d_D(z;v) \lesssim \left(\sum_{b\in B} \frac{|\tuple{b,v}|}{d_D(z;b)}\right)^{-1},
\end{equation}
where $d_D(z;v)$ corresponds to the distance from $z$ to the boundary of $D$ in direction $v$, i.e.:
\begin{equation*}
d_D(z;v) = \sup \{r\in \mathbb{R}^+|\ z+\lambda v\in D\text{ whenever }|\lambda| < r\}.
\end{equation*}
This means that the extremal bases provide a convenient linear approximation of the structure of the body $D$ in a neighbourhood of any given point $z$ in the interior of the body.

The property~\ref{eq_basic} of extremal bases has facilitated the construction of plurisubharmonic functions with bounded Hessians and for obtaining of estimates for the Bergman kernels,~\cite{M92,M94}. In~\cite{H02,H04}, the author states and uses the estimates~\ref{eq_basic} to obtain H\"older and $L^p$ estimates for the solutions of Cauchy-Riemann equations on smooth bounded pseudoconvex domains $D$ of finite type. Estimates~\ref{eq_basic} have been also applied in the study of the Kobayashi and Bergman metrics,~\cite{M92,NP03,NPZ11,NPT13} and more recently,~\cite{W22}. For a survey on the geometry of extremal bases and their applications we refer to~\cite{CD14}.

The construction of maximal bases can be described as a greedy procedure. One starts with an arbitrary point $z$ in the interior of a $\mathbb{C}$-convex domain $D$ and an empty set of vectors/directions $B_0$. Next, inductively, the current set $B_k$ is extended to $B_{k+1}$ by adding an \emph{extremal} direction $v_{k+1}$ that is orthogonal to the subspace spanned by $B_k$. The process terminates once the set $B:=B_n$ spans the entire space in which case $B$ is the desired basis.

The notion of \emph{extremity} can be specified as \emph{maximal},~\cite{M92,M94}, or as \emph{minimal},~\cite{H02,H04}, and the difference consists in whether one selects:
\begin{eqnarray*}
v_{k+1} &\in & \arg\max \{d_{D}(z;v)\,|\, v\perp span(B_k)\} \text{ or } \\
v_{k+1} &\in &\arg\min \{d_{D}(z;v)\,|\, v\perp span(B_k)\}.
\end{eqnarray*}

Though important for the applications, the proofs of the estimates~\ref{eq_basic} departing from geometric or analytical view points, depend on some kind of smoothness conditions for the domain $D$,~\cite{M92,M94,NPT13}, and often provide only implicit or rough estimates for the hidden constant. In particular, it is not evident if and how it depends on the domain $D$.

In this paper, we give answer to the above questions by proving that the estimates~\ref{eq_basic} are valid with constant $2^{n}-1$ where $n$ is the dimensionality of the space in which the (so called weakly linear) convex domain $D$ resides. In this sense this constant is independent of $D$. Furthermore it turns out that the constant $2^n-1$ is sharp, that is it cannot be improved.

Our approach is algebraic. It departs from the observation that for a weakly linear convex domain $D$ and any particular point $z$ in the interior of $D$, the function:
\begin{equation*}
f(v)=\frac{1}{d_D(z;v)}
\end{equation*}
is a semi-norm. If further the domain $D$ does not contain complex lines, then $f$ is a norm, but see Remark~\ref{seminorm} which suggests that this assumption is not essential and it is only for technical reasons that we restrict our considerations to norms. Thus the estimates~\ref{eq_basic} can be restated as:
\begin{equation}\label{eq_basic2}
\sum_{b\in B} |\tuple{b,v}|f(b) \gtrsim f(v) \gtrsim \sum_{b\in B} |\tuple{b,v}| f(b)
\end{equation}
and wheres the first inequality is satisfied with constant $1$, as it easily follows by the triangle inequality, the second inequality seems to be more
challenging.

To keep the outline self-contained, in Section~\ref{sec:preliminaries} we prove that every (bounded on the unit sphere) norm can be represented as:
\begin{equation*}
f(v)=\sup_{u\in U} |\tuple{v,u}|
\end{equation*}
for an appropriate set of vectors $U$. The geometric interpretation of $U$ is that the vectors $u\in U$ define the supporting hyperplanes to the body $D$ centred at $z$.

In Section~\ref{sec:min_max} we state and prove that for any extremal basis $B$:
\begin{equation*}
(2^n-1) f(v) \ge \sum_{b\in B} |\tuple{b,v}| f(b).
\end{equation*}
We should stress that the definition of maximal and minimal in our notation, see Definition~\ref{def:min_max}, are reciprocal to the notions used for bodies. The reason is that maximising $d_D(z;.)$ is equivalent to minimising $f$ and vice versa. Thus, Theorem~\ref{minimal} proves the statement for maximal bases w.r.t. $d_D(z;.)$ and for minimal bases w.r.t. $f$, whereas Theorem~\ref{maximal} proves the statement for minimal bases w.r.t. $d_D(z;.)$ and maximal bases w.r.t. $f$.

In Section~\ref{sec:min_max} we prove that $2^n-1$ is the best possible bound. Namely, we show that for every $\varepsilon>0$ there are norms whose maximal, resp. minimal, bases violate the inequality:
\begin{equation*}
(2^n-1-\varepsilon) f(v) \ge \sum_{b\in B} |\tuple{b,v}| f(b)
\end{equation*}
for at least one vector $v$. Since every norm gives rise to a convex body $D'=\{v\,|\, f(v)\le 1\}$ such that $d_{D'}(0;v)=\frac{1}{f(v)}$, the results translate immediately for convex bodies. Again, Proposition~\ref{min_asymptotic} handles the case of maximal basis w.r.t. $d_D(z;.)$ and minimal basis w.r.t. $f$, whereas Proposition~\ref{max_asymptotic} handles the case of minimal basis w.r.t. $d_D(z;.)$ and maximal basis w.r.t. $f$.

In Section~\ref{sec:comparison} we show that minimal and maximal bases are equivalent in their norms. Whereas similar result has been previously proven in~\cite{NPT13}, the bounds in Section~\ref{sec:comparison} are based on more accurate analysis of the algebraic structure and improve the estimate fo the constants from $2^nn!$ to $2^n$, thus reducing a factor of $n!$.

We conclude in Section~\ref{sec:open} with some open problems.

%\cite{NP03},\cite{NPZ11},\cite{NPT13},\cite{CD14},\cite{H02},\cite{M01},\cite{H04},\cite{M92},\cite{M94},\cite{W22}
\section{Preliminaries}\label{sec:preliminaries}
In what follows we assume that $V$ is a linear space on $\mathbb{F}=\mathbb{R}$ or $\mathbb{F}=\mathbb{C}$ and that $V$ is supplied with a scalar product $\tuple{\cdot,\cdot}:V\times V\rightarrow \mathbb{F}$. We denote with $\|.\|:V\rightarrow \mathbb{R}^+$ and $\mathbb{S}_1$ the induced norm and the unit sphere in $V$, respectively, i.e.:
\begin{eqnarray*}
\|u\| &=& \tuple{u,u} \text{ for } u\in V\\
\mathbb{S}_1 &=& \{u\in V\,|\, \tuple{u,u}=1\}.
\end{eqnarray*}

\begin{lemma}\label{basic}
Let $f:V\rightarrow \mathbb{R}^+$ be a norm. Assume that $f$ is bounded on the unit sphere $\mathbb{S}_1\subset V$.
\begin{enumerate}
\item There is a set of vectors $U\subseteq V$ such that:
\begin{equation*}
f(v)=\sup_{u\in U} |\tuple{v,u}|.
\end{equation*}
\item If $v_0\in V$ is a unit vector such that $f(v_0)=\sup_{v\in \mathbb{S}_1} f(v)$, then for all $u\in V$ it holds that:
\begin{equation*}
f(u) \ge |\tuple{v,u}|f(v).
\end{equation*}
\end{enumerate}
\end{lemma}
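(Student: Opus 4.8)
The plan is to base everything on a single construction: for each nonzero $w\in V$ I produce a \emph{supporting vector} $u_w\in V$ with $|\tuple{v,u_w}|\le f(v)$ for all $v\in V$ and $|\tuple{w,u_w}|=f(w)$. Part~(1) is then obtained by setting $U=\{u_w:w\in V\setminus\{0\}\}$, and part~(2) falls out of the special case where $w$ is the maximiser $v_0$ together with the Cauchy--Schwarz inequality.

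\emph{Construction of $u_w$.} On the one-dimensional subspace $\mathbb{F}w$ set $\ell_0(\lambda w):=\lambda f(w)$; then $|\ell_0(\lambda w)|=f(\lambda w)$, so $\ell_0$ is dominated there by the seminorm $f$. By the Hahn--Banach theorem --- in its complex form when $\mathbb{F}=\mathbb{C}$ --- $\ell_0$ extends to an $\mathbb{F}$-linear functional $\ell$ on $V$ with $|\ell(v)|\le f(v)$ for all $v$. The hypothesis that $f$ is bounded on $\mathbb{S}_1$ means $f\le C\|\cdot\|$ for some $C$, whence $\ell$ is continuous, and the Riesz representation theorem furnishes $u_w\in V$ with $\ell(v)=\tuple{v,u_w}$ for all $v$; this $u_w$ has the required properties. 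Part~(1) is now immediate: for every $v$ one has $\sup_{u\in U}|\tuple{v,u}|\le f(v)$ by construction, while taking $u=u_v$ attains $f(v)$ (and $v=0$ is trivial).

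\emph{Part~(2).} Apply the construction at $w=v_0$, writing $u_*:=u_{v_0}$ and $M:=f(v_0)=\sup_{v\in\mathbb{S}_1}f(v)$; since $v_0$ is a unit vector and $f$ a norm, $M>0$ and hence $u_*\ne 0$. Plugging $v=u_*/\|u_*\|$ into $|\tuple{v,u_*}|\le f(v)\le M$ gives $\|u_*\|\le M$, while $M=|\tuple{v_0,u_*}|\le\|v_0\|\,\|u_*\|=\|u_*\|$ by Cauchy--Schwarz. Thus equality holds in Cauchy--Schwarz, so $u_*=c\,v_0$ for a scalar $c$ with $|c|=M=f(v_0)$. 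For any $u\in V$ we then obtain
\[ f(u)\ge|\tuple{u,u_*}|=|c|\,|\tuple{u,v_0}|=f(v_0)\,|\tuple{v_0,u}|, \]
which is the asserted inequality (with $v=v_0$).

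The only genuinely delicate point is the invocation of the complex Hahn--Banach extension: one must dominate $\ell_0$ by the seminorm $f$ itself rather than settle for a real-linear majorant, and it is precisely the boundedness of $f$ on $\mathbb{S}_1$ that makes the extended functional continuous so that Riesz applies. Beyond that the argument is a short manipulation of Cauchy--Schwarz, so I do not anticipate any substantial obstacle; the main work is organisational, namely recognising that the same supporting-functional construction simultaneously yields the representing set $U$ and the extremal inequality of part~(2).
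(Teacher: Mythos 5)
Your proof is correct. Part~(1) follows the paper's argument verbatim (Hahn--Banach domination by $f$ on a one-dimensional subspace, continuity from boundedness on $\mathbb{S}_1$, then Riesz representation). For part~(2), though, your route is genuinely different from the paper's. The paper fixes an orthonormal basis $(e_i)$ with $e_1=v_0$ and argues by contradiction: if the extension $L:=L_{v_0}$ had $L(e_i)\ne 0$ for some $i>1$, one builds the unit vector $u'=u/\|u\|$ with $u=\overline{L(e_1)}v_0+\overline{L(e_i)}e_i$ and checks $f(u')\ge L(u')=\sqrt{f(v_0)^2+|L(e_i)|^2}>f(v_0)$, contradicting the maximality of $v_0$; hence $L$ vanishes on $v_0^{\perp}$ and $L(u)=\tuple{u,v_0}f(v_0)$. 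You instead stay with the Riesz representer $u_*$ of $L$ and show its Euclidean norm must equal $M=f(v_0)$: the bound $|\tuple{\cdot,u_*}|\le f\le M$ on $\mathbb{S}_1$ gives $\|u_*\|\le M$, while $M=|\tuple{v_0,u_*}|\le\|u_*\|$ gives the reverse, and the resulting equality in Cauchy--Schwarz forces $u_*\in\mathbb{F}v_0$. The two arguments encode the same fact --- that the supporting functional at a $\|\cdot\|$-normalised $f$-maximiser is proportional to $\tuple{\cdot,v_0}$ --- but yours reaches it in a single Cauchy--Schwarz equality rather than an ad hoc perturbation, and it reuses the object $u_*$ already produced for part~(1), which makes the whole lemma rest on one construction. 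The paper's version has the minor advantage of being entirely self-contained in the functional $L$ without invoking the equality case of Cauchy--Schwarz, but your version is shorter and, I would say, cleaner.
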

\begin{proof}
\begin{enumerate}
\item The first part is an immediate consequence of Hahn-Banach Theorem. Indeed, let $v\in V$. Then denoting by $V_0=span(v)$, we define $L_0:V_0\rightarrow \mathbb{F}$ as
$L_0(\alpha v)=\alpha f(v)$. Clearly, $L_0$ is a linear functional on $V_0$ and $|L_0(\alpha v)| =|\alpha|f(v)=f(\alpha v)$. By Hanh-Banach Theorem, since $f$ is a norm, we can extend $L_0$ to a linear functional $L_v:V\rightarrow \mathbb{F}$ such that $|L_v(u)|\le f(u)$ for all $u\in V$. Since $L_v$ is linear on $V$ and $\sup_{u\in \mathbb{S}_1} L_v(u)$ is bounded above by $\sup_{u\in \mathbb{S}_1} f(u)<\infty$, it follows that $L_v$ is a bounded linear operator and cosequently there is a vector $v^*\in V$ such that:
\begin{equation*}
\tuple{u,v^*}=L_v(u) \text{ for all } u\in V.
\end{equation*}
Let $U=\{v^*\,|\, v\in V\}$. It is straightforward that for any $u\in V$ and $v^*\in U$, $|\tuple{u,v^*}|=|L_v(u)|\le f(u)$. On the other hand $f(u)=L_u(u)=|\tuple{u,u^*}|$. Therefore:
\begin{equation*}
f(v)=\sup_{u\in U} |\tuple{v,u}|.
\end{equation*}

\item For the second part, consider the linear functional $L=L_{v_0}$ induced by $v_0$. Since $L$ is linear, it is determined by its values on an orthonormal basis on $V$. Without loss of generality we may and we do assume that $(e_i)_{i\in I}$ is such a basis with $e_1=v_0$. Assume that $L(e_i)\neq 0$ for some $i>1$. Then we consider the vector:
\begin{equation*}
u= \overline{L(e_1)} v_0 + \overline{L(e_i)} e_i=f(v_0) v_0 + \overline{L(e_i)} e_i.
\end{equation*}
Then we have that $\|u\|=\sqrt{f^2(v_0) + |L(e_i)|^2}$ and:
\begin{equation*}
L(u) = f(v_0) L(v_0)+\overline{L(e_i)}L(e_i)=f(v_0)^2 + |L(e_i)|^2.
\end{equation*}
Hence $u'=\frac{u}{\|u\|}$ is a unit vector and $L(u')= \sqrt{f(v_0)^2 + |L(e_i)|^2}$ implying that $L(u')> f(v_0)$ since $L(e_i)\neq 0$. However, $f(u')\ge L(u')>f(v_0)$ and this contradicts the maximality of $v_0$. Consequently, $L(e_i)=0$ for any $i\neq 1$. This proves that $L(u)=\tuple{u,e_1}L(e_1)=\tuple{u,v_0} f(v_0)$ and therefore:
\begin{equation*}
|\tuple{u,v_0}|f(v_0)=|L(u) | \le f(u)
\end{equation*}
as required.
\end{enumerate}
\end{proof}

\begin{remark}\label{seminorm}
We can view the first part of the above lemma as a characterisation of (semi)norms.
Indeed, if $U\subseteq V$, then $f_U:V\rightarrow \mathbb{R}^+$ defined as:
\begin{equation*}
f_U(v)=\sup_{u\in U} |\tuple{v,u}|
\end{equation*}
is a semi-norm. Actually, it is a norm iff $U^{\perp}=\{v\in V\,|\, \forall u\in U(\tuple{u,v}=0)\}$ is the trivial set $\{0\}$. Since, $U^{\perp}$ is a subspace of $V$, the properties of $f_U$ are uniquely determined by the behaviour of $f_U$ on the orthogonal subspace of $U^{\perp}$. Indeed, if $v=v'+u$ with $u\in U^{\perp}$ and $v'\perp U^{\perp}$ then:
\begin{equation*}
f_U(v') \le f_U(v) + f_U(u)=f(v) \text{ and } f_U(v)\le f_U(v') + f_U(-u)=f_U(v'),
\end{equation*}
that is $f_U(v')=f_U(v)$.
\end{remark}

\section{Minimal and Maximal Bases}\label{sec:min_max}
In this section we assume that $n$ is a positive integer, and $V$ is an $n$-dimensional linear space supplied with a scalar product $\tuple{\cdot,\cdot}:V\times V\rightarrow \mathbb{F}$ where $\mathbb{F}\in \{\mathbb{R},\mathbb{C}\}$.
\begin{definition}\label{def:min_max}
Let $f:V\rightarrow \mathbb{R}^+$ be a norm. We define an \emph{$f$-minimal ($f$-maximal, resp.)} (orthonormal) basis for $V$ inductively as follows:
\begin{itemize}
\item $n=1$, then for any $b\in \mathbb{S}_1$, $(b)$ is an $f$-minimal ($f$-maximal, resp.) basis.
\item $n>1$, then let:
\begin{eqnarray*}
b_1&\in& \arg\min \{f(b)\,|\, b\in \mathbb{S}_1\} (b_1\in \arg \max \{f(b) \,|\, b\in \mathbb{S}_1\}, \text{ resp.})\\
V_1 &=& \{u\in V\,|\, \tuple{u,v}=0\}\\
f_1 &=& f\upharpoonright V_1 \text{ be the restriction of } f \text{ to } V_1.
\end{eqnarray*}
If $(b_2,b_3,\dots,b_n)$ is an $f_1$-minimal ($f_1$-maximal, resp.) basis for $V_1$, then $(b_1,b_2,\dots,b_n)$ is an $f$-minimal ($f$-maximal, resp.) basis for $V$.
\end{itemize}
\end{definition}
\begin{definition}
We say that two orthonormal bases $(b_1,\dots,b_n)$ and $(e_1,\dots,e_n)$ of $V$ are \emph{equivalent} if for every $i\le n$ $b_i$ and $e_i$ are collinear, i.e. there is an element $\alpha_i\in \mathbb{F}$ with $|\alpha_i|=1$ such that $e_i=\alpha_i b_i$.
\end{definition}

\begin{theorem}\label{minimal}
For any norm $f:V\rightarrow \mathbb{R}^+$ and any $f$-minimal basis $(b_1,b_2,\dots,b_n)$ it holds that:
\begin{equation*}
(2^n-1)f(v)\ge \sum_{i=1}^n |\tuple{v,b_i}| f(b_i).
\end{equation*}
\end{theorem}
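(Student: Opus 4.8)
The plan is to induct on $n$, using the recursive structure of $f$-minimal bases. The base case $n=1$ is trivial since $2^1-1=1$ and $|\tuple{v,b_1}|f(b_1) = f(|\tuple{v,b_1}| b_1) = f(v)$ for $v\in\mathrm{span}(b_1)$. For the inductive step, write $v = \alpha b_1 + w$ with $w\in V_1 = b_1^\perp$, so $\alpha = \tuple{v,b_1}$ and $|\tuple{v,b_i}| = |\tuple{w,b_i}|$ for $i\ge 2$. By the induction hypothesis applied to the $f_1$-minimal basis $(b_2,\dots,b_n)$ of $V_1$,
\begin{equation*}
\sum_{i=2}^n |\tuple{v,b_i}| f(b_i) = \sum_{i=2}^n |\tuple{w,b_i}| f_1(b_i) \le (2^{n-1}-1) f_1(w) = (2^{n-1}-1) f(w).
\end{equation*}
Hence it suffices to bound $|\alpha| f(b_1) + (2^{n-1}-1) f(w)$ by $(2^n-1) f(v)$.

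The crux is to control $f(w)$ and $|\alpha| f(b_1)$ in terms of $f(v)$. Two estimates are available. First, since $b_1$ minimizes $f$ on $\mathbb{S}_1$, for the unit vector $v/\|v\|$ we get $f(b_1) \le f(v/\|v\|)$, i.e. $f(b_1)\|v\| \le f(v)$; combined with $|\alpha|\le\|v\|$ this gives $|\alpha| f(b_1) \le f(v)$. Second, by the triangle inequality $f(w) \le f(v) + f(\alpha b_1) = f(v) + |\alpha| f(b_1) \le 2 f(v)$, using the first estimate again. Plugging these in:
\begin{equation*}
|\alpha| f(b_1) + (2^{n-1}-1) f(w) \le f(v) + (2^{n-1}-1)\cdot 2 f(v) = (2^n - 1) f(v),
\end{equation*}
which is exactly the claimed bound.

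The main obstacle is precisely finding the right pair of inequalities so that the constants telescope to $2^n-1$ rather than something weaker; the naive bound $f(w)\le f(v)+|\alpha|f(b_1)$ together with $|\alpha| f(b_1)\le f(v)$ is what makes it work, and the key input is the minimality property $f(b_1)\|v\|\le f(v)$, which is the only place the extremality of the basis enters. One should double-check that the induction hypothesis genuinely applies to $f_1$ on $V_1$, i.e. that $f_1$ is again a norm bounded on its unit sphere and that $(b_2,\dots,b_n)$ is $f_1$-minimal by construction — both are immediate from Definition~\ref{def:min_max}. No appeal to Lemma~\ref{basic} is needed for this direction, though part (2) of that lemma is essentially an alternative phrasing of the minimality inequality used above (in the $f$-maximal case it will be the relevant tool).
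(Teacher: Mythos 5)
Your proof is correct and follows essentially the same route as the paper: the same orthogonal decomposition $v=\alpha b_1+w$, the same two key estimates ($|\alpha|f(b_1)\le f(v)$ from the minimality of $b_1$ via Cauchy--Schwarz, and $f(w)\le 2f(v)$ from the triangle inequality), and the same telescoping $1+2(2^{n-1}-1)=2^n-1$ in the inductive step. The paper's argument is laid out in a slightly different order but is mathematically identical, so there is nothing to add.
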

\begin{proof}
First note that for any $v\in V$ there are unique $\alpha\in \mathbb{F}$
and vector $u\in span(b_2,\dots,b_n)$ such that:
\begin{equation*}
v=u+\alpha b_1.
\end{equation*}
The statement being obvious for $v=0$, we assume that $v\neq 0$ and set $v'= \frac{v}{\|v\|}$. Then $v'\in \mathbb{S}_1$ and by the definition of $b_1$, we get that:
\begin{equation*}
f(v')\ge f(b_1)\ge \frac{|\tuple{v,b_1}|}{\|v\|}f(b_1)=\frac{|\alpha|}{\|v\|}f(b_1),
\end{equation*}
where we used the Cauchy-Schwartz inequality. So far we have that:
\begin{equation*}
f(v)=\|v\| f(v') \ge |\alpha| f(b_1).
\end{equation*}
This settles the case where $n=1$. Alternatively, i.e. for $n\ge 2$, we use the triangle inequality for $u=v - \alpha b_1$ to conclude that:
\begin{equation*}
f(u) = f(v-\alpha b_1) \le f(v) + |\alpha| f(b_1) \le f(v) + f(v)=2f(v).
\end{equation*}
With this inequality at hand we can conclude the proof the by induction on $n$. As we noticed, the case $n=1$ is settled. Assume that the conclusion of the lemma holds for any $(n-1)$-dimensional vector space $V'$ and consider the vector space $V$ of dimension $n$. Let $V'=span(b_2,\dots,b_n)$ and $f'=f\upharpoonright V'$. It follows, by definition, that $(b_2,\dots,b_n)$ is an $f'$-minimal basis for $V'$ and therefore by the induction hypothesis:
\begin{equation*}
(2^{n-1}-1)f'(u) \ge \sum_{i=2}^n |\tuple{u,b_i}| f(b_i) \text{ for any } u\in V'.
\end{equation*}
Finally, for any non-zero $v=u+\alpha b_1$ with $\alpha\in \mathbb{F}$ and $u\in V'$ we have proven that:
\begin{equation*}
f(v) \ge 2 f(u)=2f(u') \text{ and } f(v) \ge |\alpha| f(b_1).
\end{equation*}
Summing up, and using that $\tuple{v,b_i}=\tuple{u,b_i}$ for $i\ge 2$, we conclude that:
\begin{eqnarray*}
(2^n-1) f(v)&=& 2(2^{n-1}-1) f(v) + f(v) \\
&\ge& (2^{n-1}-1) f(u) + |\alpha|f(b_1)\\
&\ge& |\alpha| f(b_1) + \sum_{i=2}^n |\tuple{u,b_i}|f(b_i)\\
&=& |\tuple{v,b_1}|f(b_1) + \sum_{i=2}^n |\tuple{v,b_i}| f(b_i)\\
&=&\sum_{i=1}^n |\tuple{v,b_i}| f(b_i).
\end{eqnarray*}
\end{proof}

\begin{theorem}\label{maximal}
For any norm $f:V\rightarrow \mathbb{R}^+$ and any $f$-maximal basis $(b_1,b_2,\dots,b_n)$ it holds that:
\begin{equation*}
(2^n-1) f(v) \ge \sum_{i=1}^n |\tuple{v,b_i}| f(b_i).
\end{equation*}
\end{theorem}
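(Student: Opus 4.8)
The plan is to repeat the inductive argument of Theorem~\ref{minimal} almost verbatim; the only step that needs a different justification is the inequality $f(v)\ge|\alpha|\,f(b_1)$ for the decomposition $v=u+\alpha b_1$ with $\alpha\in\mathbb{F}$ and $u\in V':=span(b_2,\dots,b_n)$. In the minimal case this followed in one line from $b_1$ realising $\min\{f(b):b\in\mathbb{S}_1\}$ together with the Cauchy--Schwarz inequality; for a maximal basis no such elementary bound is available, and this is the one genuine obstacle. It is removed by Lemma~\ref{basic}(2): since $V$ is finite dimensional, $f$ is continuous on the compact sphere $\mathbb{S}_1$, hence bounded there and attains its maximum, and by Definition~\ref{def:min_max} the first vector $b_1$ of an $f$-maximal basis is a unit vector with $f(b_1)=\sup_{w\in\mathbb{S}_1}f(w)$. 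Applying Lemma~\ref{basic}(2) with $b_1$ in the role of $v_0$ and $v$ in the role of $u$, and using $|\tuple{v,b_1}|=|\alpha|$ (because $b_1$ is a unit vector orthogonal to $u$), we obtain $f(v)\ge|\tuple{v,b_1}|\,f(b_1)=|\alpha|\,f(b_1)$.

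With this inequality in hand, the rest is identical to the proof of Theorem~\ref{minimal}. The base case $n=1$ is immediate (in fact an equality). For the induction step, the triangle inequality gives $f(u)=f(v-\alpha b_1)\le f(v)+|\alpha|\,f(b_1)\le 2f(v)$. Since $(b_2,\dots,b_n)$ is an $f'$-maximal basis of $V'$ for $f':=f\upharpoonright V'$, the induction hypothesis gives $(2^{n-1}-1)f(u)\ge\sum_{i=2}^n|\tuple{u,b_i}|\,f(b_i)$ (note $f(b_i)=f'(b_i)$ for $i\ge2$). Combining these,
\begin{align*}
(2^n-1)f(v)&=2(2^{n-1}-1)f(v)+f(v)\ge (2^{n-1}-1)f(u)+|\alpha|\,f(b_1)\\
&\ge|\alpha|\,f(b_1)+\sum_{i=2}^n|\tuple{u,b_i}|\,f(b_i)=\sum_{i=1}^n|\tuple{v,b_i}|\,f(b_i),
\end{align*}
where in the last equality we used $\tuple{v,b_i}=\tuple{u,b_i}$ for $i\ge2$ and $|\tuple{v,b_1}|=|\alpha|$.

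Thus the whole difference from Theorem~\ref{minimal} is that Lemma~\ref{basic}(2) replaces the Cauchy--Schwarz step; the substantive work --- producing, via Hahn--Banach, a supporting functional at $b_1$ that is aligned with $b_1$ (i.e.\ annihilates its orthogonal complement) --- was already carried out in the Preliminaries. The only thing to watch is the notational clash, since the letter $v$ is used with different meanings in Lemma~\ref{basic} and in the statement above.
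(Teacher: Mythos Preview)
Your proposal is correct and follows essentially the same approach as the paper: induction on $n$, with Lemma~\ref{basic}(2) supplying the key inequality $f(v)\ge|\alpha|\,f(b_1)$, then the triangle inequality giving $2f(v)\ge f(u)$, and finally the inductive hypothesis applied to $span(b_2,\dots,b_n)$. The only difference is a harmless swap of the roles of the letters $u$ and $v$ relative to the paper's version, which you already flagged.
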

\begin{proof}
We proceed by induction on $n=dim V$. For $n=1$ there is nothing to prove. Assume that the statement of the lemma holds for vector spaces with dimensionality $n$.
Let $f:V\rightarrow \mathbb{R}^+$ be a norm. Since $f(b_1)=\max_{v\in \mathbb{S}_1} f(v)$, by Lemma~\ref{basic} we have that:
\begin{equation*}
f(u)\ge |\tuple{u,b_1}|f(b_1).
\end{equation*}
Let $V_1=span(b_2,\dots,b_n)$ and consider an arbitrary vector $u=\alpha b_1 + v$ with $v\in V_1$. Then, we have:
\begin{equation*}
f(u) \ge |\alpha| f(b_1).
\end{equation*}
Furthermore, by the triangle inequality we have:
\begin{equation*}
f(u) +|\alpha|f(b_1)=f(\alpha b_1 + v) + f(-\alpha b_1)\ge f(v).
\end{equation*}
Summing up we obtain that $2f(u) \ge f(u) + |\alpha| f(b_1)\ge f(v)$. To complete the inductive step, we use that $\tuple{u,b_i}=\tuple{v,b_i}$ for $i\ge 2$ and the inductive hypothesis for $V_1$ and $f_1=f\upharpoonright V_1$. Thus, we compute:
\begin{eqnarray*}
(2^n-1) f(u) &\ge& f(u) + 2(2^{n-1}-1) f(u)\\
&\ge& |\alpha| f(b_1) + (2^{n-1}-1) f(v)\\
&\ge& |\alpha|f(b_1) +\sum_{i=2}^n |\tuple{v,b_i}|f(b_i)\\
&=&|\alpha|f(b_1) +\sum_{i=2}^n |\tuple{u,b_i}|f(b_i)\\
\end{eqnarray*}
where the second line follows by the fact $f(u)\ge |\alpha|f(b_1)$ and $2f(u)\ge f(v)$, and the third line follows by the inductive hypothesis.
\end{proof}
\section{Lower Bounds}\label{sec:lower_bounds}
In this section we prove that the results from Theorems~\ref{minimal} and~\ref{maximal} are sharp. Our constructions use the characterisation from Lemma~\ref{basic} and Remark~\ref{seminorm} as a basic tool to define norms. In both cases given an $\varepsilon>0$, we construct a finite set $U\subseteq \mathbb{R}^n$ that defines a norm:
\begin{equation*}
f:\mathbb{C}^n\rightarrow \mathbb{R}^+ \text{ s.t. } f(v) =\max_{u\in U} |\tuple{v,u}|.
\end{equation*}
The set $U$ is tailored in such a way that it admits:
\begin{enumerate}
\item as $f$-minimal ($f$-maximal, resp.) an orthonormal\footnote{Actually, it is unique up to equivalence.} basis $$(e_1,\dots,e_n)\subset \mathbb{R}^n$$ and
\item a witness $v_0\in \mathbb{R}^n$ such that:
\begin{equation*}
(2^n-1-\varepsilon) f(v_0)<\sum_{i=1}^n |\tuple{v_0,e_i}| f(e_i).
\end{equation*}
\end{enumerate}
Since $U\subseteq \mathbb{R}^n$, $(e_1,\dots,e_n)\subseteq \mathbb{R}^n$ and $v_0\in \mathbb{R}^n$, the restriction $f_R$ of $f$ to $\mathbb{R}^n$ provides a norm:
\begin{equation*}
f_R:\mathbb{R}^n\rightarrow \mathbb{R}^+ \text{ with } f_R(v) = f(v)
\end{equation*}
that admits as $f_R$-minimal ($f$-maximal, resp.) basis again $(e_1,\dots,e_n)$ and $v_0$ is a witnesses that:
\begin{equation*}
(2^n-1-\varepsilon) f_R(v_0)<\sum_{i=1}^n |\tuple{v_0,e_i}| f_R(e_i).
\end{equation*}

\begin{proposition}\label{min_asymptotic}
Let $n\ge 1$be an integer and $V=\mathbb{C}^n$. Then for any $\varepsilon>0$, there is a norm $f:V\rightarrow \mathbb{R}^+$ and a nonzero vector $v\in \mathbb{R}^n$ such that:
\begin{enumerate}
\item $f$ admits a unique, up to equivalence, $f$-minimal basis $$(e_1,\dots,e_n)\subseteq \mathbb{R}^n$$ and
\item
\begin{equation*}
(2^n-1-\varepsilon)f(v)\le \sum_{i=1}^n |\tuple{v,e_i}| f(e_i).
\end{equation*}
\end{enumerate}
\end{proposition}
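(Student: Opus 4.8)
The plan is to construct the set $U\subseteq\RR^n$ explicitly by reverse-engineering the inductive structure of the proof of Theorem~\ref{minimal}, since equality there is forced only when every triangle inequality invoked is (nearly) tight. Tracing the induction, we see that for the witness $v=u+\alpha b_1$ the bound is nearly attained when $f(v)\approx 2f(u)$, i.e. $f(u-\alpha b_1)\approx f(u)+f(v)$ and $|\alpha|f(b_1)\approx f(v)$, and then recursively the same must hold inside $V_1=\mathrm{span}(b_2,\dots,b_n)$. This suggests taking the standard orthonormal basis $e_1,\dots,e_n$ of $\RR^n$ (which will be the $f$-minimal basis), with $f(e_i)$ growing geometrically, roughly $f(e_i)\sim 2^{i-1}$, and choosing $v_0=e_1+e_2+\cdots+e_n$ (or signs thereof) as the witness. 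The functionals $u\in U$ are the ``supporting'' vectors: for each $k$ one puts in $U$ a vector that is large in the $e_k$-direction and has carefully tuned small components in the directions $e_{k+1},\dots,e_n$ so that it nearly computes $f(v_0)=\sum f(e_i)\big/(2^n-1)\cdot(2^n-1)$, while for each coordinate direction $e_k$ itself there is a $u\in U$ with $\tuple{e_k,u}=f(e_k)$, certifying the value on the basis vectors.

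Concretely, I would first fix parameters: let $\delta>0$ be small (to be chosen at the end in terms of $\eps$), set $f(e_i)=c_i$ with $c_1=1$ and $c_{i}$ defined recursively so that the "doubling" step is almost tight, e.g. $c_i=(2-\delta')^{\,i-1}$ for a suitable $\delta'$, and declare $U$ to consist of (a) the vectors $c_i e_i$, $i=1,\dots,n$, and (b) for each $k$ a vector $w_k = c_k e_k + \sum_{j>k}\mu_{k,j} e_j$ with small positive $\mu_{k,j}$ chosen so that $\tuple{v_0,w_k}$ is close to $\sum_{i\ge k} c_i$ but $|\tuple{e_j,w_k}|\le c_j$ for all $j$ (so (b) never increases $f$ on the basis vectors). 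Then $f(v_0)=\max_{u\in U}|\tuple{v_0,u}|$ is achieved (up to $O(\delta)$) at $w_1$, giving $f(v_0)\approx\sum_i c_i$ divided by nothing — wait, one must instead arrange $f(v_0)$ to be a factor $\approx 2^n-1$ smaller than $\sum c_i$; this is done by making the $\mu_{k,j}$ small enough that $\tuple{v_0,w_k}$ is dominated by its single large term $c_k$ plus a controlled tail, so $f(v_0)=\max_k\bigl(c_k+\text{tail}_k\bigr)\approx c_n\cdot(1+o(1))$ while $\sum_i c_i f(e_i)=\sum_i c_i^2\approx c_n^2\cdot\frac{2-\delta'}{1-\delta'}$, and the ratio $\sum c_i^2 / (c_n\cdot\text{something})$ is tuned to $2^n-1-\eps$. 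The bookkeeping here — choosing the $c_i$ and $\mu_{k,j}$ so that simultaneously (i) $(e_1,\dots,e_n)$ is genuinely the unique $f$-minimal basis, (ii) $f(v_0)$ is small, and (iii) $\sum_i|\tuple{v_0,e_i}|f(e_i)$ is large — is the technical heart.

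The two steps I expect to be the main obstacles are: first, \emph{verifying uniqueness of the $f$-minimal basis}, i.e. showing that $e_1$ is the \emph{strict} minimizer of $f$ on $\mathbb S_1$ (so that no competing direction ties it) and then that the induction carries through in $V_1$ with the restricted functional set $U\cap V_1^{\perp\perp}$ — this requires that the off-diagonal perturbations $\mu_{k,j}$ do not accidentally make some mixed direction cheaper than $e_1$; one controls this by taking all $\mu_{k,j}$ much smaller than the gaps $c_{i+1}-c_i$. Second, \emph{the limiting calculation}: one must show the constructed ratio $\dfrac{\sum_{i=1}^n|\tuple{v_0,e_i}|f(e_i)}{f(v_0)}$ tends to $2^n-1$ as $\delta\to0$. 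I would verify this by checking it matches the recursion $R_n = 2R_{n-1}+1$ with $R_1=1$ (hence $R_n=2^n-1$) that already governs the upper bound in Theorem~\ref{minimal}: in the tight configuration each inductive inequality $f(v)\ge 2f(u)$ and $f(v)\ge|\alpha|f(b_1)$ becomes an equality in the limit, so the telescoped sum realizes exactly $2^n-1$. Finally, restricting $f$ from $\CC^n$ to $\RR^n$ as noted in the preamble transfers everything verbatim, completing the proof of Proposition~\ref{min_asymptotic}.
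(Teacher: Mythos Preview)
Your high-level strategy --- reverse-engineer the induction in Theorem~\ref{minimal} and pass to a limit --- is the right one, and is exactly what the paper does. But your concrete parameters cannot work, and this is not just bookkeeping. You propose $c_i=f(e_i)\approx 2^{i-1}$ and $v_0=e_1+\cdots+e_n$. Now, if $(e_1,\dots,e_n)$ is to be $f$-minimal, then $f(v)\ge \|v\|\,f(e_1)$ for every $v$; in particular $f(v_0)\ge\sqrt n$. But $\sum_i|\tuple{v_0,e_i}|f(e_i)=\sum_i c_i=2^n-1$, so the ratio you obtain is at most $(2^n-1)/\sqrt n$, strictly bounded away from $2^n-1$. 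More generally, tracing the tightness conditions in Theorem~\ref{minimal} for $v=u+\alpha b_1$ shows you need \emph{both} $f(v)\approx|\alpha|f(b_1)$ and $f(u)\approx 2f(v)$; the first forces $\|u\|$ to be small compared to $|\alpha|$ (since $f(v)\ge\|v\|f(b_1)$), and iterating this down the basis forces the later coordinates of the witness to be \emph{small}, not equal to~$1$. Correspondingly $f(e_{i+1})/f(e_i)$ must blow up, not stay near~$2$.

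The paper's construction reflects this: with a parameter $s\to 0$, it builds $U_i$ inductively so that $f(e_j)/f(e_{j-1})=1/(s^2(2s+1))\to\infty$, while the witness has coordinates $\tuple{v^{(1)},e_j}=(2s^2)^{\,j-1}\to 0$; only the \emph{product} $|\tuple{v^{(1)},e_j}|f(e_j)=\bigl(2/(2s+1)\bigr)^{j-1}$ tends to $2^{j-1}$. The set $U_{i-1}$ is not ``diagonal plus small perturbations'' but a two-scale family $e_{i-1}\pm\frac{1}{s(2s+1)}u$ and $e_{i-1}\pm\frac{1}{s^2(2s+1)}u$ for $u\in U_i$, where the two scales $s^{-1}$ and $s^{-2}$ are precisely what makes $e_{i-1}$ the \emph{strict} minimizer on $\mathbb S_1\cap V_{i-1}$ while keeping $f_{i-1}(v^{(i-1)})$ close to $1$. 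Your sketch does not contain this mechanism, and the ``small $\mu_{k,j}$'' you propose would not give a strict minimum at $e_1$ once the necessary large ratios are in place. In short: the approach is right, the scaling is wrong, and fixing the scaling requires exactly the nontrivial inductive construction you deferred as ``bookkeeping.''
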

\begin{proof}
Let $(e_1,e_2,\dots,e_n)\subseteq \mathbb{R}^n$ be a fixed orthonormal basis for $V$ and $\varepsilon>0$. We set $V_i=span(e_i,\dots,e_n)$. Finally, let $s\in (0,1)$ whose value will be specified appropriately later. We set out to inductively construct sets $U_i\subseteq V_i\cap \mathbb{R}^n$, norms $f_i:V_i\rightarrow \mathbb{R}^+$ and witnesses $v^{(i)}\in V_i\cap \mathbb{R}^n$ with the following properties:
\begin{enumerate}
\item $f_i(v)=\max \{|\tuple{v,u}|\,|\, u\in U_i\}$,
\item $f_i(e_i)=1=\min_{v\in \mathbb{S}_1\cap V_i} f_i(v)$,
\item $f_{i}(v)=\frac{1}{s^2(2s+1)}f_{i+1}(v)$ for all $v\in V_{i+1}$,
\item $\tuple{v^{(i)},e_j}=(2s^2)^{j-i}$ is such that $f_i(v^{(i)})\in [1,1+2s)$ and:
\begin{equation*}
(2^{n-i}-1)f(v^{(i)})<\sum_{j=i}^n |\tuple{v^{(i)},e_j}|(1+2s)^{2(j-i)+1} f_i(e_j))
\end{equation*}%$f_i(v^{(i)})\le \sum_{j=i}^n\left|\tuple{v^{(i)}_j,e_j}\right|\left(\frac{2}{1+2s}\right)^{n-j}f_i(e_j)$.
\end{enumerate}
We start with $U_n=\{e_n\}$ and $f_n(\alpha e_n)=|\alpha|$, $v^{(n)}=e_n$. It is straightforward to see that these objects satisfy the above assumptions. Assume that for some $i>1$ the set $U_i$, the norm $f_i$ and the witness $v^{(i)}$ are defined and satisfy the above assumptions. We define $U_{i-1}$, $f_{i-1}$ and $v^{(i-1)}$ as follows. Let
\begin{equation*}
U_i^+ =\{u\in U_i \,|\, \tuple{v^{(i)},u}\ge 0\}\text{ and }U_i^- =\{u\in U_i \,|\, \tuple{v^{(i)},u}< 0\}.
\end{equation*}
Next we define:
\begin{eqnarray*}
U_{i-1} &= \left\{ e_{i-1} + \frac{1}{s(2s+1)}u\,|\, u\in U^+_i\right\} \cup \left\{ e_{i-1} - \frac{1}{s^2(2s+1)}u\,|\, u\in U^+_i\right\}\\
&\cup \left\{ e_{i-1} - \frac{1}{s(2s+1)}u\,|\, u\in U^-_i\right\} \cup \left\{ e_{i-1} + \frac{1}{s^2(2s+1)}u\,|\, u\in U^-_i\right\}.
\end{eqnarray*}
Now, we define $f_{i-1}$ and $v_{i-1}$ as:
\begin{eqnarray*}
f_{i-1} &=& \max\{|\tuple{v,u}|\, |\, u\in U_{i-1}\}\\
v^{(i-1)} &=& e_{i-1} + 2s^2 v^{(i)}.
\end{eqnarray*}
Note that since, by assumption, $U_i\subseteq \mathbb{R}^n$ and $v^{(i)}\in \mathbb{R}^n$, the sets $U_i^+$ and $U_i^{-}$ are well-defined. Hence, it should be clear that the $U_{i-1}\subseteq \mathbb{R}^n$ and $v^{(i-1)}\in \mathbb{R}^n$.

Now we verify that $U_{i-1}$, $f_{i-1}$ and $v^{(i-1)}$ possess the desired properties:
\begin{enumerate}
\item The first property is satisfied by definition.
\item The third property is also clear for $s\in (0,1)$.
\item To see that the second property holds, first note that:
\begin{equation*}
\tuple{e_{i-1},u}=1 \text{ for all } u\in U_{i-1}.
\end{equation*}
Next, consider an arbitrary element $v\in U_{i-1}\cap \mathbb{S}_1$. It has a unique representation $v=\alpha e_{i-1} + v'$ with $|\alpha|^2 + \|v'\|^2=1$ with $v'\in V_i$.
By the induction hypothesis we have that $f_i(v')\ge \|v'\|$. Since $U_i$ is finite, the value $f_i(v')=|\tuple{v',u'}|$ is attained for some $u'\in U_i$. Let us fix such $u'$ and set $a,b\in \mathbb{R}$ such that:
\begin{equation*}
\tuple{v',u'}= a+ ib.
\end{equation*}
We also set $c,d\in \mathbb{R}$ such that $\alpha=c+id$. Thus, for if $\sigma\in \{s^{-1},s^{-2},-s^{-1},-s^{-2}\}$ we have:
\begin{equation*}
\tuple{v,e_{i-1}+\sigma u'} = c+id +\sigma(a+ib)=(c+\sigma a) + i (d+ \sigma b).
\end{equation*}
Consequently:
\begin{eqnarray*}
|\tuple{v,e_{i-1}+\sigma u'}| &=& (c+\sigma a)^2 + (d+\sigma b)^2\\
&=& c^2 + d^2 + \sigma^2 (a^2+b^2) + 2\sigma(ac + bd) \\
&=& |\alpha|^2 + \sigma^2|\tuple{v',u'}|^2 + 2\sigma(ac + bd)\\
&=& |\alpha|^2+ \sigma^2 f^2_i(v') + 2\sigma(ac+bd) \\
&\ge & |\alpha|^2 + \sigma^2 \|v'\|^2 + 2\sigma(ac+bd).
\end{eqnarray*}
By construction, we can always choose the sign of $\sigma$, i.e. we have either the option $\sigma\in \{s^{-1},-s^{-2}\}$ or  $\sigma\in \{-s^{-1},s^{-2}\}$. Now, choosing $\sigma$ such that the sign of $\sigma$ is the same as $(ac+bd)$ we conclude that:
\begin{eqnarray*}
|\tuple{v,e_{i-1}+\sigma u'}| &\ge& |\alpha|^2 + \sigma^2 \|v'\|^2 + 2\sigma(ac+bd)\\
&\ge& |\alpha|^2 +\sigma^2 \|v\|^2\\
&\ge& |\alpha|^2 + \|v'\|^2\\
&=& 1,
\end{eqnarray*}
where the first inequality follows by the choice of $\sigma$ and the second by the fact that $|\sigma|>1$. Furthermore the last inequality turns into equality if and only if $v'=0$.
This proves that for any vector $v\in \mathbb{S}_1\cap V_{i-1}$ which is not collinear with $e_{i-1}$, it holds that $f_i(v)>1$. Consequently:
\begin{equation*}
f_{i-1}(e_{i-1})=\min_{v\in V_{i-1}\cap \mathbb{S}_1} f_{i-1}(v)
\end{equation*}
and the minimum is attained only for vectors of the form $\alpha e_{i-1}$ with $|\alpha|=1$. Hence w.l.o.g. $e_1$ belongs to the minimal basis.
\item Finally, we check the last condition. Let $v^{(i-1)}\in  V_{i-1}$ be such that:
\begin{equation*}
v^{(i-1)}=e_{i-1} + 2s^2 v^{(i)}.
\end{equation*}
First, let $u\in U_i^+$. In particular, $\tuple{u,v^{(i)}}\ge 0$. Therefore:
\begin{eqnarray*}
\tuple{v^{(i-1)},e_{i-1}-\frac{1}{s^2(2s+1)}u}&=&\tuple{e_{i-1} + 2s^2v^{(i)},e_{i-1}-\frac{1}{s^2(2s+1)}u}\\
&=& 1- \frac{2}{2s+1}\tuple{v^{(i)},u'}\\
&=&1- \frac{2}{2s+1}|\tuple{v^{(i)},u}|\le 1.
\end{eqnarray*}
Since $u\in U_i$, we have that $f_i(v^{(i)})\ge |\tuple{v^{(i)},u}|$ and by the assumption that $f_i(v^{(i)})\le 1+2s$ we conclude that:
\begin{equation*}
\tuple{v^{(i-1)},e_{i-1}-\frac{1}{s^2(2s+1)}u}=1- \frac{2}{2s+1}|\tuple{v^{(i)},u}|>1-2=-1.
\end{equation*}
Hence $|\tuple{v^{(i-1)},e_{i-1}-\frac{1}{s^2(2s+1)}u}|\le 1$ for all $u\in U_i^+$. On the other hand:
\begin{eqnarray*}
\tuple{v^{(i-1)},e_{i-1}+\frac{1}{s(2s+1)}u} &=&1+ \frac{2s^2}{s(2s+1)}\tuple{v^{(i)},u}\\
&=& 1+ \frac{2s}{1+2s}|\tuple{v^{(i)},u}|\in [1,1+2s).
\end{eqnarray*}
Similarly, for $u\in U_i^-$ we have that:
\begin{eqnarray*}
\tuple{v^{(i-1)},e_{i-1}+\frac{1}{s^2(2s+1)}u}&=&1- \frac{2}{2s+1}|\tuple{v^{(i)},u}|\in (-1,1]\\
\tuple{v^{(i-1)},e_{i-1}-\frac{1}{s(2s+1)}u}&=&1+ \frac{2s}{2s+1}|\tuple{v^{(i)},u}|\in [1,1+2s).
\end{eqnarray*}
Hence the maximum value of $\tuple{v^{(i-1)},u'}$ when $u'\in U_{i-1}$ is attained for some $u'$ of the form
\begin{eqnarray*}
u'&=&e_{i-1}+\frac{1}{s(2s+1)}u \text{ with } u\in U_i^+ \text{ or}\\
u'&=&e_{i-1}-\frac{1}{s(2s+1)}u \text{ with } u\in U_i^-
\end{eqnarray*}
such that $|\tuple{v^{(i)},u}|$ is maximised. This shows that $$f_{i-1}(v^{(i-1)})=1+\frac{2s}{2s+1}f_{i}(v^{(i)}).$$
So far we have that $f_{i-1}(v^{(i-1)})\in [1,1+2s)$.

We proceed to show that the last inequality holds, To this end, first note:
\begin{eqnarray*}
\tuple{v^{(i-1)},e_{i-1}} &=&1\\
\tuple{v^{(i-1)},e_{j}} &=& 2s^2 \tuple{v^{(i)},e_j} \text{ for } j\ge i.
\end{eqnarray*}
Recalling that $f_{i-1}(e_{i-1})=1$ and $f_{i-1}(e_j)=s^2(2s+1)f_i(e_j)$ for $j\ge i$, we conclude that:
\begin{equation*}
(1+2s)|\tuple{v^{(i-1)},e_{j}}|f_{i-1}(e_j)=2|\tuple{v^{(i)},e_j}| f_i(e_j) \text{ for } j\ge i.
\end{equation*}
Therefore, using that $f_{i-1}(v^{(i-1)})< 1+2s\le (1+2s)f_i(v^{(i)})$, we compute:
\begin{eqnarray*}
(2^{n-i+1}-1) f_{i-1}(v^{(i-1)}) &=& f_{i-1}(v^{(i-1)})+ 2(2^{n-i}-1)f_{i-1}(v^{(i-1)})  \\
&< & 1+2s + 2(2^{n-i}-1)f_{i-1}(v^{(i-1)})\\ %(f_{i-1}(v^{(i-1)})<1+2s) \quad
  &\le &(1+2s)(1+2f_i(v^{(i)})(2^{n-i}-1))\\ %(f_{i-1}(v^{(i-1)})< (1+2s)f_{i}(v^{(i)})) \quad
\text{(inductive hypothesis) } &\le & (1+2s) \left(1 + \right .\\
&&\left . 2\sum_{j=i}^n |\tuple{v^{(i)},e_j}|(1+2s)^{2(j-i)+1} f_i(e_j)\right)\\
&\le& (1+2s)( |\tuple{v^{(i-1)},e_{i-1}}| f(e_{i-1})+ \\
&&\sum_{j=i}^n (1+2s)^{2(j-i)+3}|\tuple{v^{(i-1)},e_j}|f_{i-1}(e_j))\\
(\text{ setting }i'=i-1) &=&\sum_{j=i'}^n(1+2s)^{2(j-i')+1}|\tuple{v^{(i')},e_j}|f_{i'}(e_j)
\end{eqnarray*}
as required.
\end{enumerate}
Now letting $s$ tend to zero, we see that $f_1(v^{(1)})$ satisfies the conclusion of the lemma.
\end{proof}

\begin{proposition}\label{max_asymptotic}
Let $n\ge 1$ be an integer and $V=\mathbb{C}^n$. For any real number $\varepsilon>0$, there is a norm $f:V\rightarrow \mathbb{R}^+$ and a non-zero vector $v\in \mathbb{R}^n$ such that:
\begin{enumerate}
\item $f$ admits a unique up to equivalence $f$-maximal basis $$(e_1,e_2,\dots,e_n)\subseteq \mathbb{R}^n,$$
\item
\begin{equation*}
(2^n-1-\varepsilon) f(v)<\sum_{i=1}^n |\tuple{v,e_i}|f(e_i).
\end{equation*}
\end{enumerate}
\end{proposition}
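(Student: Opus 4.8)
\noindent The plan is to imitate the construction in the proof of Proposition~\ref{min_asymptotic}, but in a \emph{dualised} form: there one enlarges the previously built coordinates (by a factor $>1$), forcing each freshly added direction to be the \emph{smallest}; here one instead shrinks them by a fixed factor $c\in(0,1)$, forcing each freshly added direction to be the \emph{largest}. Fix an orthonormal basis $(e_1,\dots,e_n)\subseteq\mathbb R^n$, put $V_i=span(e_i,\dots,e_n)$ (complex spans), fix a small $c\in(0,1)$ and set $\rho=c^2$. I would construct, by downward induction on $i$, finite sets $U_i\subseteq V_i\cap\mathbb R^n$, the norms $f_i(v)=\max_{u\in U_i}|\tuple{v,u}|$ on $V_i$, and witnesses $v^{(i)}\in V_i\cap\mathbb R^n$, starting from $U_n=\{e_n\}$, $v^{(n)}=e_n$ (so $f_n(e_n)=f_n(v^{(n)})=1$), and passing from $i$ to $i-1$ via
\begin{equation*}
U_{i-1}=\{e_{i-1}\}\cup\bigl\{\beta_u e_{i-1}+c\,u : u\in U_i\bigr\},\qquad v^{(i-1)}=e_{i-1}+b_{i-1}\,v^{(i)},
\end{equation*}
where $b_{i-1}=\frac{2}{c\,f_i(v^{(i)})}$ and $\beta_u=-\operatorname{sign}\tuple{v^{(i)},u}\cdot\sqrt{1-c^2\|u\|^2-\rho}$ (either sign if $\tuple{v^{(i)},u}=0$).

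\medskip
\noindent The invariants I would carry through the induction are: (a) $\max_{u\in U_i}\|u\|=1$, attained only at $\pm e_i$, so $e_i$ is the unique $f_i$-maximal direction in $V_i$ and $f_i(e_i)=1$; (b) $f_{i-1}(v)=c\,f_i(v)$ for $v\in V_i$, whence $f_1(e_j)=c^{\,j-1}$; (c) $f_i(v^{(i)})\in[1,1+2c^2]$; (d) $\sum_{j=i}^n|\tuple{v^{(i)},e_j}|f_i(e_j)\ge (2^{n-i+1}-1)/(1+2c^2)^{\,n-i}$. Here (a) is the decisive point, and the place where the argument genuinely departs from the minimal case: every new vector $\beta_u e_{i-1}+cu$ has Euclidean norm $\sqrt{1-\rho}<1=\|e_{i-1}\|$, and since $\max_{\|v\|=1}f_{i-1}(v)=\max_{u\in U_{i-1}}\|u\|$ by Cauchy--Schwarz, $e_{i-1}$ becomes the strict maximiser for free. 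Invariant (b) is immediate from $\tuple{v,\beta_u e_{i-1}+cu}=c\tuple{v,u}$ for $v\perp e_{i-1}$. For (c) note that for $u\in U_i$ the two summands in $\tuple{v^{(i-1)},\beta_u e_{i-1}+cu}=b_{i-1}c\tuple{v^{(i)},u}+\beta_u$ have \emph{opposite} signs, with $|b_{i-1}c\tuple{v^{(i)},u}|\le 2$ and $|\beta_u|\ge\sqrt{1-c^2-\rho}$, so its modulus is at most $2-\sqrt{1-c^2-\rho}\le 1+2c^2$; together with the term from $e_{i-1}$ (equal to $1$) this pins $f_{i-1}(v^{(i-1)})$ to $[1,1+2c^2]$. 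Finally (d) propagates because $\tuple{v^{(i-1)},e_{i-1}}=1$, $\tuple{v^{(i-1)},e_j}=b_{i-1}\tuple{v^{(i)},e_j}$ for $j\ge i$, and $b_{i-1}c=2/f_i(v^{(i)})$, so
\begin{equation*}
\sum_{j=i-1}^n|\tuple{v^{(i-1)},e_j}|f_{i-1}(e_j)=1+\frac{2}{f_i(v^{(i)})}\sum_{j=i}^n|\tuple{v^{(i)},e_j}|f_i(e_j)\ge 1+\frac{2(2^{n-i+1}-1)}{(1+2c^2)^{\,n-i+1}}\ge\frac{2^{n-i+2}-1}{(1+2c^2)^{\,n-i+1}}.
\end{equation*}

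\medskip
\noindent To finish I would set $f=f_1$, $v=v^{(1)}$. The set $U_1$ spans $\mathbb R^n$, so $f$ is a norm on $\mathbb C^n$. Its maximal basis is read off level by level: by (a), $e_1$ is the unique $f$-maximal direction, and by (b) the restriction of $f$ to $e_1^\perp=span_{\mathbb C}(e_2,\dots,e_n)$ equals $c\,f_2$, whose maximal direction is again unique and equal to $e_2$, and so on; hence $(e_1,\dots,e_n)$ is the unique-up-to-equivalence $f$-maximal basis. By (c) and (d), $\sum_{j=1}^n|\tuple{v,e_j}|f(e_j)\big/f(v)\ge (2^n-1)/(1+2c^2)^{\,n}\to 2^n-1$ as $c\to0$, so choosing $c$ small enough yields $(2^n-1-\varepsilon)f(v)<\sum_{j=1}^n|\tuple{v,e_j}|f(e_j)$; restricting $f$ and $v$ to $\mathbb R^n$ gives the real statement exactly as explained at the beginning of Section~\ref{sec:lower_bounds}. (Note $|U_1|=n$, so all sets are finite.)

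\medskip
\noindent The step I expect to cost the most care is (c), together with the precise definition of $\beta_u$ that it dictates. Forcing $e_{i-1}$ to be maximal is cheap, but it forces \emph{all} other support vectors to have Euclidean norm $<1$, so the $e_{i-1}$-coefficient of each must be pinned very close to $\pm1$; the sign $\beta_u=-\operatorname{sign}\tuple{v^{(i)},u}\cdot(\cdots)$ is precisely what makes $b_{i-1}c\tuple{v^{(i)},u}$ \emph{cancel} against $\beta_u$ rather than add to it, keeping $f_{i-1}(v^{(i-1)})$ near $1$. Had one also thrown in the opposite-sign vectors $-\beta_u e_{i-1}+cu$ (as the minimal construction does, for an unrelated reason), the witness value would jump to about $3$ and the estimate would collapse. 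Everything else is just accounting for the $O(c^2)$ slack across the $n$ levels so that the compounded ratio still converges to $2^n-1$.
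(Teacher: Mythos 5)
Your proof is correct, but it follows a genuinely different route from the paper's. Where the paper constructs the whole family of supporting functionals in one shot --- $u_k'=\sum_{j<k}e_j\sin^{j-1}\!\alpha\cos\alpha+e_k\sin^{k-1}\!\alpha$, $u_k=c^{k-1}u_k'$ with $c\to1$ and $\alpha\to\pi^-$, with a closed-form test vector $w_i=w_1\tan^{i-1}(\alpha/2)$ whose interaction with every $u_k$ telescopes to $\tuple{w,u_k}=c^{k-1}w_1$ --- you run a downward induction that dualises the mechanism of Proposition~\ref{min_asymptotic}: normalise each level so $f_i(e_i)=1$, shrink the inherited support vectors by a factor $c\to0$ and graft an $e_{i-1}$-component of magnitude just under $1$ so that $e_{i-1}$ is forced to be the strict Euclidean maximiser of $U_{i-1}$, and propagate the four invariants (a)--(d). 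I checked the delicate points: the sign choice in $\beta_u=-\operatorname{sign}\tuple{v^{(i)},u}\sqrt{1-c^2\|u\|^2-c^2}$ indeed makes $\beta_u$ and $b_{i-1}c\tuple{v^{(i)},u}$ cancel, and the case split (whether $|\beta_u|$ or $2|\tuple{v^{(i)},u}|/f_i(v^{(i)})$ dominates) keeps $f_{i-1}(v^{(i-1)})\le 2-\sqrt{1-2c^2}\le 1+2c^2$; invariant (d) telescopes as stated since $b_{i-1}c=2/f_i(v^{(i)})$ and $f_{i-1}(e_j)=cf_i(e_j)$ for $j\ge i$, and $(1+2c^2)^{n-i+1}\ge1$ absorbs the stray $+1$; $U_1$ has exactly $n$ elements spanning $\mathbb{R}^n$, so $f$ is a norm; and uniqueness of the $f$-maximal basis follows level by level from (a) and (b) via the Cauchy--Schwarz characterisation $\max_{\|v\|=1}\max_{u\in U}|\tuple{v,u}|=\max_{u\in U}\|u\|$. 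The trade-off: the paper's trigonometric telescoping makes the final ratio $\sum w_if(e_i)/f(w)$ an explicit product, at the cost of a less transparent verification that each $e_k$ is extremal at the right stage; your version makes the extremality of each $e_i$ immediate (new support vectors have strictly smaller Euclidean norm) at the cost of having to carry the $(1+2c^2)$-slack through $n$ levels, which works since it only produces an $n$-th power that vanishes as $c\to0$. Both are sound; yours also exhibits the structural duality with the minimal-basis lower bound, which the paper's trigonometric construction obscures.
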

\begin{proof}
Let $\varepsilon>0$ be fixed and $e_1,\dots,e_n\in \mathbb{R}^n$ be an orthonormal basis of $V$. We are going to construct a norm $f:V\rightarrow \mathbb{R}^+$ whose unique $f$-maximal basis is $(e_1,\dots,e_n)$ and satisfies the conclusion of the lemma. To this end consider a real number $c\in (0,1)$ and an angle $\alpha\in (0,\pi)$ whose price values will be determined appropriately.

%$1=c_1>c_2>\dots>c_n>0$ and angles $\alpha_1,\alpha_2,\dots,\alpha_{n-1}\in (0,\pi)$ whose precise values will be determined appropriately.

Given, the constants $c$ and $\alpha$ we define the vector $u'_k\in V$ for $k=1,2\dots,n$ as follows:
\begin{eqnarray*}
u'_1&=& e_1\\
u'_{k+1}&=& \sum_{j=1}^k e_j \sin^{j-1}\alpha \cos \alpha + e_{k+1}\sin^k \alpha.
\end{eqnarray*}
We  set $u_k=c^{k-1}u_k'$ and define the norm $f:V\rightarrow \mathbb{R}^+$ as:
\begin{equation*}
f(v) =\sup\{|\tuple{v,u_k}| \,|\, k\le n\}.
\end{equation*}
By Remark~\ref{seminorm} we know that $f$ is a semi-norm. Since $f(e_k)\ge |\tuple{e_k,u_k}|>0$, we see, again by Remark~\ref{seminorm}, that $f$ is a norm.
Further, by Lemma~\ref{basic}, we know that the maximum value $f(v)$ on $\mathbb{S}_1$ is attained at a vector that is collinear to some of the vectors $u_k$ and is equal to $f(v)=\|u_k\|$. Since $\|u_k\|=c_k$, we conclude that the first vector of the maximal basis is $e_1=u_1$. Next, the subspace, $V_1$, of $V$ that is orthogonal to $e_1$ is spanned by $(e_2,\dots,e_n)$ and therefore $f_1=f\upharpoonright V_1$ is actually:
\begin{equation*}
f_1(v) = \sup\{|\tuple{v,u_k-u_1\cos \alpha}\,|\, k\le n\}.
\end{equation*}
Since $\|u_k-u_1\cos\alpha|=c^{k-1}\sin\alpha$, applying again Lemma~\ref{basic}, we conclude that the maximal value of $f_1$ on $\mathbb{S}_1$ is attained at $e_2$. Proceeding inductively, we may prove that $(e_1,\dots,e_n)$ is the unique, up to equivalence, $f$-maximal basis. Note that:
\begin{equation*}
f(e_i)=\tuple{e_i,u_i}=c^{i-1} \sin^{i-1} \alpha.
\end{equation*}

Let $w_1>0$ and $w_i=w_1\tg^{i-1}\frac{\alpha}{2}$. It should be clear that:
\begin{eqnarray*}
w_i \cos{\alpha}+w_{i+1} \sin{\alpha}&=& w_i (\cos{\alpha} +\sin{\alpha}\tg\frac{\alpha}{2})\\
&=& w_i(2\cos^2{\alpha/2}-1 + 2\sin^2{\alpha/2})=w_i.
\end{eqnarray*}
Therefore, setting $w=\sum_{i=1}^n w_i e_i$, we obtain that $$\tuple{w,u_k}=c^{k-1}\tuple{w,u_k'}=c^{k-1}w_1$$ and thus $|\tuple{w,u_k}|=c^{k-1}w_1\le w_1$ with equality if and only if $k=1$. Hence $f(w)=w_1$.

On the other hand:
\begin{eqnarray*}
w_i f(e_i) &=& c^{i-1}w_1 \tg^{i-1} \frac{\alpha}{2}\sin^{i-1}{\alpha}\\
&=& c^{i-1}w_1 \left(2\sin^2\frac{\alpha}{2}\right)^{i-1}\\
&=& 2^{i-1} c^{i-1}w_1\sin^{(2(i-1))}\frac{\alpha}{2}.
\end{eqnarray*}
It follows that:
\begin{equation*}
\sum_{i=1}^n w_i f(e_i)\ge w_1 \sum_{i=1}^n 2^{i-1} [c^{i-1} \sin^{2(i-1)}\frac{\alpha}{2}].
\end{equation*}
Clearly, letting $c$ tend to $1$ and $\alpha$ tend to $\pi-0$, the right hand side tends to $w_1\sum_{i=1}^{n}2^{i-1}=(2^n-1)w_1=(2^n-1)f(w)$. Thus, for any $\varepsilon>0$, we can find appropriate $c\in (0,1)$ and $\alpha\in (0,\pi)$ such that $(2^{n}-1-\varepsilon)f(w)<\sum_{i=1}^n w_i f(e_i)$.
\end{proof}

\section{Equivalence of Bases}\label{sec:comparison}
\begin{definition}
Let $f:V\rightarrow \mathbb{R}^+$ be a norm. Let $(e_1,e_2,\dots,e_n)$ be an orthonormal basis for $V$ arranged in increasing order w.r.t. $f$, i.e.:
\begin{equation*}
f(e_1)\le f(e_2)\le \dots\le f(e_n).
\end{equation*}
\begin{enumerate}
\item For a constant $c\in \mathbb{R}^+$, we say that $(e_1,e_2,\dots,e_n)$ satisfies the property $P_f(c)$ if for every $\beta_1,\beta_2,\dots,\beta_n\in \mathbb{F}$ it holds:
\begin{equation*}
c f\left(\sum_{i=1}^n \beta_i e_i\right)\ge \sum_{i=1}^n |\beta_i|f(e_i).
\end{equation*}
\item More generally, for constants $c_1,c_2,\dots,c_n\in \mathbb{R}^+$, we say that $(e_1,\dots,e_n)$ satisfies the property $HP_f(c_1,c_2,\dots,c_n)$ if for every $i$ and every $\beta_i,\beta_{i+1},\dots,\beta_n$ it holds that:
\begin{equation*}
c_i f\left(\sum_{j=i}^n \beta_i e_i\right)\ge \sum_{j=i}^n |\beta_i|f(e_i).
\end{equation*}
\end{enumerate}
\end{definition}
\begin{remark}
With these notions, Lemma~\ref{maximal} states that there is an $f$-maximal basis that satisfies $P_f(2^n-1)$. Furthermore, since every prefix of an $f$-maximal basis, $(b_1,b_2,\dots,b_n)$, is a maximal basis for its linear span, it follows that the $f$-maximal basis from Lemma~\ref{maximal} actually satisfies $HP_f(2^n-1,2^{n-1}-1,\dots,1)$.

On the other hand, Lemma~\ref{minimal} states that every $f$-minimal basis satisfies $P_f(2^n-1)$.

It is also obvious that if a basis $(e_1,\dots,e_n)$ satisfies $P_f(c)$, then it satisfies $HP_f(c,c,\dots,c)$. Conversely, if an orthonormal basis $(e_1,\dots,e_n)$ satisfies $HP_f(c_1,\dots,c_n)$, then it satisfies $P_f(c_1)$.
\end{remark}
\begin{lemma}\label{bases_equivalence}
Let $(b_1,b_2,\dots,b_n)$ and $(e_1,e_2,\dots,e_n)$ be orthonormal bases on $V$ and $f:V\rightarrow \mathbb{R}^+$ be a norm such that:
\begin{eqnarray*}
f(b_1)\le f(b_2)\le \dots \le f(b_n)&\text{ and } \\
f(e_1)\le f(e_2)\le \dots \le f(e_n).&
\end{eqnarray*}
Then:
\begin{enumerate}
\item if $(b_1,\dots,b_n)$ is $f$-minimal, then for every $i\le n$, $ f(b_i)\le \sqrt{i} f(e_i)$,
\item if $(e_1,e_2,\dots,e_n)$ satisfies $HP_f(c_1,c_2,\dots,c_n)$, then for every $i\le n$ it holds that $f(e_i)\le c_i\sqrt{i} f(b_i)$.
%is a maximal basis with constant property, then for every $i\le n$, $f(b_i)\ge f(e_i) \frac{\sqrt{i}}{(n!)^{3/2}(2^n-1)^n}$.
\end{enumerate}
\end{lemma}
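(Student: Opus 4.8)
The plan is to prove both parts with a single device: a dimension count that forces two subspaces of complementary-plus-one degree to intersect nontrivially, followed by two applications of the Cauchy--Schwarz inequality to convert the $\ell^2$ information carried by orthonormality into the $\ell^1$ information needed to run the triangle inequality for $f$.

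For part (1), fix $i\le n$ and consider $W:=\operatorname{span}(e_1,\dots,e_i)$ and $W':=\operatorname{span}(b_i,\dots,b_n)$. Since $\dim W+\dim W'=i+(n-i+1)=n+1>n=\dim V$, there is a unit vector $v\in W\cap W'$. By the inductive clause of Definition~\ref{def:min_max}, the suffix $(b_i,\dots,b_n)$ is an $f$-minimal basis of $W'$, so $b_i$ minimizes $f$ over the unit sphere of $W'$; hence $f(v)\ge f(b_i)$. On the other hand, writing $v=\sum_{j=1}^i\tuple{v,e_j}e_j$ with $\sum_{j=1}^i|\tuple{v,e_j}|^2=1$, the triangle inequality together with $f(e_j)\le f(e_i)$ for $j\le i$ gives $f(v)\le f(e_i)\sum_{j=1}^i|\tuple{v,e_j}|\le\sqrt{i}\,f(e_i)$ by Cauchy--Schwarz. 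Chaining the two estimates yields $f(b_i)\le\sqrt{i}\,f(e_i)$.

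For part (2), swap the roles of the indices: by the same dimension count there is a unit vector $v\in\operatorname{span}(e_i,\dots,e_n)\cap\operatorname{span}(b_1,\dots,b_i)$. Writing $v=\sum_{j=i}^n\beta_j e_j$ with $\sum_{j=i}^n|\beta_j|^2=1$, the property $HP_f(c_1,\dots,c_n)$ applied at index $i$ together with $f(e_j)\ge f(e_i)$ for $j\ge i$ gives $c_i f(v)\ge\sum_{j=i}^n|\beta_j|f(e_j)\ge f(e_i)\sum_{j=i}^n|\beta_j|\ge f(e_i)$, where the last step uses $\|\cdot\|_1\ge\|\cdot\|_2=1$. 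Writing instead $v=\sum_{j=1}^i\gamma_j b_j$ with $\sum_{j=1}^i|\gamma_j|^2=1$, the triangle inequality and $f(b_j)\le f(b_i)$ give $f(v)\le f(b_i)\sum_{j=1}^i|\gamma_j|\le\sqrt{i}\,f(b_i)$. Combining, $f(e_i)\le c_i f(v)\le c_i\sqrt{i}\,f(b_i)$.

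The only point requiring care---and the one I would flag as the main (though mild) obstacle---is feeding the extremality hypothesis into the correct subspace: in part (1) one must know that a suffix of an $f$-minimal basis is itself $f$-minimal in its own span, so that $f(v)\ge f(b_i)$ holds for \emph{every} unit $v$ of that span and not merely for $v$ orthogonal to the earlier $b_j$'s; and in part (2) one must apply $HP_f$ at the index $i$, which is legitimate precisely because $HP_f(c_1,\dots,c_n)$ quantifies over all tails beginning at $i$. Everything else---the intersection of subspaces and the two Cauchy--Schwarz estimates---is routine.
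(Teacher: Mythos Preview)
Your proof is correct and is essentially the same as the paper's. The paper phrases the intersection step as ``find a unit vector in $\operatorname{span}(e_1,\dots,e_i)$ orthogonal to $b_1,\dots,b_{i-1}$'' (and symmetrically for part (2)), but since $\{b_1,\dots,b_{i-1}\}^\perp=\operatorname{span}(b_i,\dots,b_n)$ this is exactly your $W\cap W'$; the remaining estimates via the triangle inequality, the monotone ordering, and Cauchy--Schwarz are identical.
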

\begin{proof}
Let $i\le n$ and note that $(b_1,b_2,\dots,b_{i-1})$ spans an $i$-dimensional subspace of $V$, whereas $(e_1,e_2,\dots,e_{i})$ spans an $i$-dimensional subspace of $V$. Hence there is a non-zero vector $b'\in span(e_1,\dots,e_i)$ that is orthogonal to all the vectors $b_1,b_2,\dots,b_{i-1}$. Indeed, it is straightforward that the system:
\begin{equation*}
\sum_{j=1}^i \alpha_j \tuple{e_j,b_k}= 0 \text{ for } k\le i-1
\end{equation*}
is overdetermined. Hence it admits a non-zero solution $\alpha=(\alpha_1,\dots,\alpha_i)$ and since $e_1,\dots,e_i$ are independent, $b'=\sum_{j=1}^i \alpha_j e_j$ is non-zero. Next, without loss of generality, we may and we do assume that $\|b'\|=1$. Thus $b'\in \mathbb{S}_1$ and $b'$ is orthogonal to all the vectors $b_1,\dots,b_{i-1}$. By the definition of $b_i$, it follows that:
\begin{equation*}
f(b_i)\le f(b')=f\left(\sum_{j=1}^i \alpha_j e_j\right)\le \sum_{j=1}^i |\alpha_j| f(e_j),
\end{equation*}
where the last inequality follows by the triangle inequality. Finally, by the arrangement of the vectors $e_1,\dots,e_i$ we have $f(e_j)\le f(e_i)$ for every $j\le i$ and by the Cauchy-Schwartz inequality we have $\sum_{j=1}^i |\alpha_j|\le \sqrt{i}\sqrt{\sum_{j=1}^i |\alpha_j|^2}=\sqrt{i}$. Summing up we obtain:
\begin{equation*}
f(b_i)\le \sum_{j=1}^i |\alpha_j| f(e_j)\le \sum_{j=1}^i |\alpha_j| f(e_i)\le \sqrt{i} f(e_i).
\end{equation*}

For the second part of the statement, we proceed similarly. Let $i\le n$. Then $(b_1,\dots,b_i)$ span a linear space of dimensionality $i$ whereas $(e_1,\dots,e_{i-1})$ span a linear space of dimensionality $i-1$. Then, as above, there is a non-zero vector $v\in span(b_1,\dots,b_i)$ that is orthogonal to all the vectors $e_1,\dots,e_{i-1}$. Without loss of generality we may and we do assume that $v$ is a unit vector. Since $v$ is orthogonal to $e_1,\dots,e_{i-1}$, it belongs to the linear space spanned by $(e_i,\dots,e_n)$. Hence $v$ can be written as $v=\sum_{j=i}^n \alpha_j e_j$. By the $HP_f(c_1,\dots,c_n)$ of the basis $(e_1,\dots,e_n)$, we conclude that:
\begin{equation*}
c_i f(v) = c_if\left(\sum_{j=i}^n \alpha_j e_j\right)\ge  \sum_{j=i}^n |\alpha_j| f(e_j)\ge  \sum_{j=i}^n |\alpha_j| f(e_i)\ge f(e_i),
\end{equation*}
where the last but one inequality follows by the ordering of the vectors $(e_1,\dots,e_n)$ and the last inequality follows by the fact that $\sum_{j=i}^n|\alpha_j|^2=1$, as $v$ is a unit vector.

On the other hand, $v\in span(b_1,\dots,b_i)$ and hence $v=\sum_{j=1}^i \beta_j b_j$. Since $\|v\|=1$, we have that $\sum_{j=1}^i |\beta_j|^2=1$. Therefore, applying the triangle inequality, we obtain:
\begin{equation*}
f(v)=f\left(\sum_{j=1}^i \beta_j b_j\right)\le \sum_{j=1}^i |\beta_j| f(b_j)\le  \sum_{j=1}^i |\beta_j|f(b_i) \le  \sqrt{i} f(b_i),
\end{equation*}
where the last but one inequality follows by the order of $(b_1,\dots,b_n)$ and the last one is a trivial application of the Cauchy-Schwartz inequality.

Summing up we get:
\begin{equation*}
f(e_i) \le c_i f(v) \le c_i \sqrt{i} f(b_i)
\end{equation*}
as claimed.
\end{proof}
\begin{corollary}
Let $f:V\rightarrow \mathbb{R}^+$ be a norm and $(b_1,b_2,\dots,b_n)$ and $(e_1,e_2,\dots,e_n)$ be orthonormal bases  such that:
\begin{eqnarray*}
f(b_1)\le f(b_2)\le \dots \le f(b_n)&\text{ and } \\
f(e_1)\le f(e_2)\le \dots \le f(e_n).&
\end{eqnarray*}
\begin{enumerate}
\item If $(b_1,\dots,b_n)$ and $(e_1,\dots,e_n)$ are $f$-minimal, then $$\frac{1}{\sqrt{i}}\le \frac{f(b_i)}{f(e_i)}\le \sqrt{i}.$$
\item If $(b_1,\dots,b_n)$ is $f$-minimal and  $f$-maximal basis $(e_1,\dots,e_n)$, then $\frac{1}{\sqrt{i}(2^{n-i+1}-1)}\le \frac{f(b_i)}{f(e_i)}\le \sqrt{i}$.
\item If $(b_1,\dots,b_n)$ and $(e_1,\dots,e_n)$ are $f$-maximal, then $$\frac{1}{\sqrt{i}(2^{n-i+1}-1)}\le \frac{f(b_i)}{f(e_i)}\le \sqrt{i}(2^{n-i+1}-1).$$
\end{enumerate}
\end{corollary}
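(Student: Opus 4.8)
The plan is to read off all three estimates from Lemma~\ref{bases_equivalence}, using the two observations recorded in the Remark preceding it; no fresh computation is required, only care in matching the correct hypothesis to the correct basis.

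The single preparatory point is that an $f$-minimal basis, and an $f$-maximal basis listed (as the corollary requires) in increasing order of $f$, both satisfy $HP_f(2^n-1,2^{n-1}-1,\dots,1)$, i.e.\ one may take $c_i=2^{n-i+1}-1$. For an $f$-minimal basis this is immediate from Definition~\ref{def:min_max}: the tail $(b_i,\dots,b_n)$ is an $f$-minimal basis of its span $W_i=\mathrm{span}(b_i,\dots,b_n)$, of dimension $n-i+1$, for $f\upharpoonright W_i$, so Theorem~\ref{minimal} in $W_i$ gives $(2^{n-i+1}-1)\,f\bigl(\sum_{j\ge i}\beta_j b_j\bigr)\ge\sum_{j\ge i}|\beta_j|\,f(b_j)$. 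For an $f$-maximal basis, listing its vectors in increasing order of $f$ reverses the construction order, so the tail of the relisted basis at position $i$ spans a prefix of the original $f$-maximal basis; by the Remark that prefix is itself an $f$-maximal basis of its span, of dimension $n-i+1$, and Theorem~\ref{maximal} again yields the constant $2^{n-i+1}-1$.

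Granting this, each part of the corollary is two applications of Lemma~\ref{bases_equivalence}. For (1): part (1) of the lemma applied to the $f$-minimal basis $(b_1,\dots,b_n)$ gives $f(b_i)\le\sqrt i\,f(e_i)$, and the same with the two bases interchanged (legitimate since $(e_1,\dots,e_n)$ is also $f$-minimal) gives $f(e_i)\le\sqrt i\,f(b_i)$, whence $1/\sqrt i\le f(b_i)/f(e_i)\le\sqrt i$. For (2): the upper bound is once more part (1) of the lemma for the $f$-minimal basis $(b_1,\dots,b_n)$, and the lower bound is part (2) of the lemma applied to the $f$-maximal basis $(e_1,\dots,e_n)$ with $c_i=2^{n-i+1}-1$, giving $f(e_i)\le\sqrt i\,(2^{n-i+1}-1)\,f(b_i)$. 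For (3): both bases satisfy $HP_f(\cdot)$ with $c_i=2^{n-i+1}-1$, so part (2) of the lemma used in each order gives $f(e_i)\le\sqrt i\,(2^{n-i+1}-1)\,f(b_i)$ and $f(b_i)\le\sqrt i\,(2^{n-i+1}-1)\,f(e_i)$, the asserted two-sided bound.

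The step I expect to attract scrutiny is the orientation bookkeeping just mentioned: an $f$-maximal basis is produced in \emph{decreasing} order of $f$, whereas $HP_f(\cdot)$ refers to the tails of the \emph{increasingly} ordered list, so one must pass through the ``prefixes of an $f$-maximal basis are $f$-maximal'' form of the Remark rather than the ``tails'' form that settles the minimal case directly; one should check that the constants come out as $2^{n-i+1}-1$ under this reversal (they do). It is also worth noting why the minimal case avoids the exponential factor on the upper side: part (1) of Lemma~\ref{bases_equivalence} uses that $b_i$ genuinely \emph{minimizes} $f$ on $\mathbb{S}_1\cap\mathrm{span}(b_1,\dots,b_{i-1})^{\perp}$, whereas for an $f$-maximal basis the analogous extremality points the wrong way and forces one onto the $HP_f(\cdot)$ route, which carries the factor $2^{n-i+1}-1$.
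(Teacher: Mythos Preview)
Your proof is correct and is exactly the argument the paper intends: the Corollary is stated without proof immediately after Lemma~\ref{bases_equivalence}, and your derivation reads each part off from that lemma together with the Remark's observation that an $f$-maximal basis satisfies $HP_f(2^n-1,2^{n-1}-1,\dots,1)$. The orientation bookkeeping you flag (reversing the construction order of an $f$-maximal basis so that tails of the increasing listing correspond to prefixes of the original, which are $f$-maximal for their span) is handled correctly; note that the $HP_f$ property you establish for $f$-minimal bases is true but not actually needed anywhere in the Corollary, since part~(1) of Lemma~\ref{bases_equivalence} applies to $f$-minimal bases directly.
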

Under certain additional assumptions, Lemma~\ref{bases_equivalence} can be inverted as follows:
\begin{proposition}
Assume that $f:V\rightarrow\mathbb{R}^+$ is a norm and $(b_1,\dots,b_n)$ and $(e_1,\dots,e_n)$ and $c\ge 1$ are orthonormal bases for $V$ such that:
\begin{eqnarray*}
f(b_1)\le f(b_2)\le \dots \le f(b_n) \\
f(e_1)\le f(e_2)\le \dots \le f(e_n) \\
\forall i (f(e_i)/c\le f(b_i)\le c f(e_i)).
\end{eqnarray*}
If $(b_1,\dots,b_n)$ satisfies $H_f(c_1)$ and additionally there is $\alpha\ge 1$ such that:
\begin{equation*}
\forall i\neq j \exists \alpha_{i,j}(|\alpha_{i,j}|<\alpha \text{ and } \tuple{e_i-b_i,e_j-\alpha_{i,j} b_j}=0),
\end{equation*}
then $(e_1,\dots,e_n)$ satisfies $H_f(\alpha c^2 c_1)$.
\end{proposition}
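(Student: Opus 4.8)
The plan is to push everything through the unitary change-of-basis matrix $A=(a_{ij})$ defined by $a_{ij}=\tuple{e_i,b_j}$, so that $e_i=\sum_j a_{ij}b_j$ and, for every $v$, $\tuple{v,e_i}=\sum_j\overline{a_{ij}}\,\tuple{v,b_j}$. First I would unwind the auxiliary hypothesis. For $i\ne j$, expanding $\tuple{e_i-b_i,\,e_j-\alpha_{i,j}b_j}=0$ and using $\tuple{e_i,e_j}=\tuple{b_i,b_j}=0$ collapses the identity to $\tuple{b_i,e_j}=-\overline{\alpha_{i,j}}\,\tuple{e_i,b_j}$, i.e. $a_{ji}=-\alpha_{i,j}\overline{a_{ij}}$; hence $|a_{ji}|=|\alpha_{i,j}|\,|a_{ij}|<\alpha\,|a_{ij}|$, and, interchanging the roles of $i$ and $j$, also $|a_{ij}|<\alpha\,|a_{ji}|$. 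Thus the off-diagonal moduli of $A$ are symmetric up to the factor $\alpha$.

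\textbf{Core estimate (rows to columns of $A$).} Fix $j$. Since $e_j=\sum_i a_{ji}b_i$, the hypothesis $H_f(c_1)$ on $(b_1,\dots,b_n)$ — that is, $c_1 f(\sum_i\mu_i b_i)\ge\sum_i|\mu_i|f(b_i)$ for all coefficients — bounds the $j$-th row of $A$ against the weights $f(b_i)$: $\sum_i|a_{ji}|f(b_i)\le c_1 f(e_j)$. To transfer this to the $j$-th column I would peel off the diagonal term, replace each off-diagonal $|a_{ij}|$ by $\alpha|a_{ji}|$ via the near-symmetry above, and then use $\alpha\ge1$ to reabsorb the diagonal term:
\begin{equation*}
\sum_i|a_{ij}|f(b_i)=|a_{jj}|f(b_j)+\sum_{i\ne j}|a_{ij}|f(b_i)\le |a_{jj}|f(b_j)+\alpha\sum_{i\ne j}|a_{ji}|f(b_i)\le\alpha\sum_i|a_{ji}|f(b_i)\le\alpha c_1 f(e_j).
\end{equation*}
This is exactly the place where the extra hypothesis is indispensable: without a comparison between $|a_{ij}|$ and $|a_{ji}|$ one has no control whatsoever over the columns of $A$ against the weights $f(b_i)$.

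\textbf{Assembly.} Given an arbitrary $v=\sum_i\beta_i e_i$, put $\gamma_j=\tuple{v,b_j}$, so that $v=\sum_j\gamma_j b_j$ and $|\beta_i|\le\sum_j|a_{ij}|\,|\gamma_j|$. Then, using $f(e_i)\le c f(b_i)$, the column bound just established, $f(e_j)\le c f(b_j)$, and finally $H_f(c_1)$ applied to $v=\sum_j\gamma_j b_j$ (note $\gamma_j=\tuple{v,b_j}$), one gets
\begin{equation*}
\sum_i|\beta_i|f(e_i)\le c\sum_j|\gamma_j|\sum_i|a_{ij}|f(b_i)\le\alpha c\,c_1\sum_j|\gamma_j|f(e_j)\le\alpha c^2 c_1\sum_j|\gamma_j|f(b_j)\le\alpha c^2 c_1^{\,2}\,f(v),
\end{equation*}
which already shows that $(e_1,\dots,e_n)$ satisfies $H_f$ with a constant of the right shape.

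\textbf{Expected obstacle.} The delicate point — and where I expect the author's argument to improve on this first attempt — is bringing the constant down to the asserted $\alpha c^2 c_1$ rather than the $\alpha c^2 c_1^{\,2}$ obtained above. The extra factor $c_1$ arises because $H_f(c_1)$ gets invoked twice: once on each $e_j$ inside the column estimate, and once on $v$ at the very end. A sharper analysis must make these two invocations cost only a single factor $c_1$ between them — for instance by not treating the passage $v=\sum_j\gamma_j b_j$ as a separate step but folding it into the column estimate, or by estimating $\sum_i|\beta_i|f(e_i)$ against $f(v)$ in one stroke through the matrix $A$. Everything else is routine manipulation with the triangle inequality and the unitarity of $A$.
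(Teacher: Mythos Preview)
Your argument is the paper's, up to bookkeeping. Both set $a_{ij}=\tuple{e_i,b_j}$, extract $|\tuple{b_i,e_j}|\le\alpha\,|\tuple{e_i,b_j}|$ from the auxiliary hypothesis, apply $H_f(c_1)$ once to each basis vector and once to the generic $v$, insert the comparisons $f(e_k)\le c\,f(b_k)$ twice, and collapse through $\tuple{v,e_j}=\sum_i\tuple{v,b_i}\tuple{b_i,e_j}$. The only cosmetic difference is where the two $c$-factors are absorbed: the paper first packages a per-index bound $\alpha c^2 f(b_i)\ge\sum_j|\tuple{b_i,e_j}|\,f(e_j)$ and then multiplies by the global $H_f(c_1)$, whereas you distribute the $c$'s in the assembly step.

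Your ``expected obstacle'' is real, and the paper does not overcome it. In the paper's displayed chain
\[
f(b_i)\ \ge\ \frac{f(e_i)}{c}\ =\ \frac{c_1 f(e_i)}{cc_1}\ \ge\ \frac{1}{c}\sum_{j}|\tuple{e_i,b_j}|\,f(b_j)\ \ge\ \frac{1}{c^{2}}\sum_j|\tuple{e_i,b_j}|\,f(e_j)
\]
the third step silently drops a factor $c_1$: dividing $c_1 f(e_i)\ge\sum_j|\tuple{e_i,b_j}|\,f(b_j)$ by $cc_1$ produces $\tfrac{1}{cc_1}\sum_j(\cdots)$ on the right, not $\tfrac{1}{c}\sum_j(\cdots)$. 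With this corrected, the per-index bound reads $\alpha c^{2} c_1\,f(b_i)\ge\sum_j|\tuple{b_i,e_j}|\,f(e_j)$, and the final assembly then yields $\alpha c^{2} c_1^{\,2}\,f(v)\ge\sum_j|\tuple{v,e_j}|\,f(e_j)$ --- exactly your constant. So you have reproduced the paper's argument and correctly located the slip; there is no sharper idea hidden in the paper that removes the second $c_1$.
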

\begin{proof}
Since $(b_1,\dots,b_n)$ satisfies $H_f(c_1)$, it follows that:
\begin{equation*}
c_1 f(e_i)\ge \sum_{j=1}^n |\tuple{e_i,b_j}| f(b_j).
\end{equation*}
Hence:
\begin{eqnarray*}
f(b_i) &\ge & f(e_i)/c \\
&\ge & c_1 f(e_i)/(cc_1)\\
&\ge & \sum_{j=1}^n |\tuple{e_i,b_j}| f(b_j)/c\\
&\ge & \frac{1}{c^2}\sum_{j=1}^n |\tuple{e_i,b_j}| f(e_j).
\end{eqnarray*}
Note that the condition $\tuple{e_i-b_i,e_j-\alpha_{i,j} b_j}=0$ can be rewritten as:
\begin{equation*}
\tuple{e_i,e_j} + \overline{\alpha_{i,j}}\tuple{b_i,b_j} - \tuple{b_i,e_j} - \overline{\alpha_{i,j}}\tuple{e_i,b_j}=0.
\end{equation*}
For $i\neq j$ it holds that $\tuple{b_i,b_j}=\tuple{e_i,e_j}=0$ and therefore for $i\neq j$ we have:
\begin{equation*}
|\tuple{b_i,e_j}|= |\alpha_{i,j}| |\tuple{e_i,b_j}|\le \alpha |\tuple{e_i,b_j}|.
\end{equation*}
Hence the above inequality implies that:
\begin{equation*}
f(b_i) \ge \frac{1}{c^2}\sum_{j=1}^n |\tuple{e_i,b_j}| f(e_j) \ge \frac{1}{\alpha c^2}\sum_{j=1}^n |\tuple{b_i,e_j}| f(e_j).
\end{equation*}
Therefore:
\begin{equation*}
\alpha c^2 f(b_i) \ge \sum_{j=1}^n |\tuple{b_i,e_j}| f(e_j).
\end{equation*}
Finally, for arbitrary $v$ the above inequality and the validity of $H_f(c_1)$ for the basis $(b_1,\dots,b_n)$ imply:
\begin{eqnarray*}
\alpha c^2 c_1 f(v) &\ge& \alpha c^2 \sum_{i=1}^n |\tuple{v,b_i}| f(b_i)\\
&\ge & \sum_{i=1}^n\sum_{j=1}^n |\tuple{v,b_i}||\tuple{b_i,e_j}| f(e_j)\\
&= & \sum_{j=1}^n f(e_j) \sum_{i=1}^n |\tuple{v,b_i}\tuple{b_i,e_j}|\\
&\ge &\sum_{j=1}^n f(e_j)\left| \sum_{i=1}^n \tuple{v,b_i}\tuple{b_i,e_j} \right|\\
&=&\sum_{j=1}^n |\tuple{v,e_j}| f(e_j).
\end{eqnarray*}
This proves that $(e_1,\dots,e_n)$ satisfies $H_f(\alpha c^2 c_1)$.

\end{proof}

\section{Open Problems}\label{sec:open}
The definitions of $f$-minimal and $f$-maximal bases of a norm suggest a simple greedy strategy to find an orthonormal basis $(e_1,\dots,e_n)$ which satisfies the inequality:
\begin{equation*}
f(\sum_{i=1}^n \alpha_i e_i)\ge \frac{1}{2^n-1}\sum_{i=1}^n |\alpha_i|f(e_i).
\end{equation*}
Of course, the feasibility of this approach depends on the possibility to efficiently solve the optimisation problem:
\begin{equation*}
\max_{v\in \mathbb{S}_1} f(v) \text{ or } \min_{v\in \mathbb{S}_1} f(v).
\end{equation*}
Yet, since every (semi)norm $f$ is convex and $\mathbb{S}_1$ is compact, especially the second problem is well studied and efficient methods for its solution stay at hand.

However, the problem with the greedy approach is that the constant $2^n-1$ grows exponentially with $n$ and it may be inconvenient to prove precise bounds in general.
As we have proven in Lemma~\ref{min_asymptotic} and Lemma~\ref{max_asymptotic}, the constant $2^n-1$ cannot be improved under the suggested greedy strategy.
Thus, the natural question that arises is how this constant can be improved while preserving the clear structure of the bases that it implies. In this respect, we consider the following theoretical problems.

First, for a natural number $n\ge 1$, and a linear vector space $V$ with inner product, where $V=\mathbb{C}^n$ or $V=\mathbb{R}^n$. We define $c_n^{\perp}:=c_n^{\perp}(V)$ to be the least real number such that for every norm $f:V\rightarrow \mathbb{R}^+$ there is an orthonormal basis $(b_1,\dots,b_n)$ such that:
\begin{equation*}
f(\sum_{i=1}^n \alpha_i b_i)\ge \frac{1}{c_n^{\perp}}\sum_{i=1}^n |\alpha_i|f(b_i) \text{ for all } \alpha_1,\dots,\alpha_n \in \mathbb{F}.
\end{equation*}
It is known that for $V=\mathbb{R}^2$, $c_2^{\perp}=2$,~\cite{Lassak02}. The construction in~\cite{Lassak02} relies upon defining appropriate areas and the continuity principle to show existence. Is there a more explicit way to define such a basis? To the best knowledge of the authors, the techniques from $n=2$ do not extend to higher dimensions.

Secondly, for a natural number $n\ge 1$, and a linear vector space $V$ with inner product, where $V=\mathbb{C}^n$ or $V=\mathbb{R}^n$. We define $c_n^{\angle}=c_n^{\angle}(V)$ to be the least real number such that for every norm $f:V\rightarrow \mathbb{R}^+$ there is an basis $(b_1,\dots,b_n)$ of unit vectors such that:
\begin{equation*}
f(\sum_{i=1}^n \alpha_i b_i)\ge \frac{1}{c_n^{\angle}}\sum_{i=1}^n |\alpha_i|f(b_i) \text{ for all } \alpha_1,\dots,\alpha_n \in \mathbb{F}.
\end{equation*}
It is known that for $V=\mathbb{R}^2$, $c_2^{\angle}=\frac{3}{2}$,~\cite{Asplund60}.% and in general, $c_n^{\angle}\le \sqrt{n^2-n+1}$,~\cite{}.
This question is tightly related with John's Theorem~\cite{John48} which relies on volumes' optimisation.

Both questions can be uniformly stated as follows. Let $n\ge 1$, and $V=\mathbb{C}^n$ or $V=\mathbb{R}^n$ and $\alpha\in [0,1]$. Define $c_n^{\alpha}:=c_n^{\alpha}(V)$ to be the least real number such that for every norm $f:V\rightarrow \mathbb{R}^+$ there is a basis $(b_1,\dots,b_n)$ of unit vectors such that:
\begin{eqnarray*}
f\left(\sum_{i=1}^n \alpha_i b_i\right) &\ge &\frac{1}{c_n^{\alpha}}\sum_{i=1}^n |\alpha_i|f(b_i) \text{ for all } \alpha_1,\dots,\alpha_n \in \mathbb{F}\\
\text{subject to}: |\tuple{b_i,b_j}| &\le& \alpha\text{ for all } i\neq j.
\end{eqnarray*}
In this framework, $c_n^{\perp}(V)=c_n^{0}(V)$ and $c_n^{\angle}(V)=c_n^{1}(V)$. We consider that the freedom to vary $\alpha$ may be useful in applications where this kind of inequalities are to be combined with other classical inequalities where the scalar products of the basis' vectors has to be controlled.

%\bibliographystyle{plain}
%\bibliography{analysis}

\end{document}